\newtheorem{theorem}{Theorem}
\newtheorem{corollary}{Corollary}
\newtheorem{lemma}{Lemma}
\theoremstyle{remark}
\newtheorem{remark}{Remark}
\begin{document}

\title[quasi-convolution of analytic functions]{\large quasi-convolution of analytic functions with applications}

\author[K. O. Babalola]{K. O. BABALOLA}

\begin{abstract}
In this paper we define a new concept of quasi-convolution for analytic functions normalized by $f(0)=0$ and $f^\prime(0)=1$ in the unit disk $E=\{z\in \mathbb{C}\colon |z|<1\}$. We apply this new approach to study the closure properties of a certain class of analytic and univalent functions under some families of (known and new) integral operators.
\end{abstract}



\maketitle

\section{Introduction}
Let $\mathbb{A}$ denote the class of functions:
\begin{equation}
f(z)=z+a_2z^2+...\, \label{1}
\end{equation}
which are analytic in the unit disk $E=\{z\in \mathbb{C}\colon
|z|<1\}$. In \cite{TOO}, Opoola introduced the subclass
$T_n^\alpha(\beta)$ consisting of functions $f \in A$ which
satisfy:
\begin{equation}
Re \frac{D^nf(z)^\alpha}{\alpha^nz^\alpha}>\beta\, \label{2}
\end{equation}
\\where $\alpha>0$ is real, $0\le\beta<1$, $D^n(n\in\mathbb{N})$ is the Salagean derivative operator defined as:
$D^nf(z)=D(D^{n-1}f(z))=z[D^{n-1}f(z)]^{\prime}$ with
$D^0f(z)=f(z)$ and powers in ~(\ref{2}) meaning principal
determinations only. The geometric condition ~(\ref{2}) slightly
modifies the one given originally in \cite{TOO} (see \cite{BO}). If $\beta=0$, we have the class $B_n(\alpha)$ studied by Abdulhalim in \cite{SA}.

Let $g(z)=z+b_2z^2+... \in\mathbb{A}$. The convolution (or Hadamard
product) of $f(z)$ and $g(z)$ (written as $(f*g)(z)$) is defined as
\[
(f*g)(z)=z+\sum_{k=2}^\infty a_kb_kz^k.
\]

The above concept has proved very resourceful in dealing with certain problems of the theory of analytic and univalent functions, especially closure of families of functions under certain transformations (see\cite{NN}). It is natural, therefore, to desire to investigate the convolution properties of classes of functions. However, readers familiar with studies in Bazilevic functions of type $\alpha$ (see \cite{SA, KOB, BO, BO2, THM, TOO, RS, KY}) would appreciate the challenges posed by the index $\alpha$ in some characterizations of those functions, particularly when $\alpha\neq 1$ or is generally non-integers. Of note, in particular, is in their convolution. Perhaps it is the reason the convolution problem for the various families of such functions has not been addressed, or that no single paper has appeared treating it, especially in the case $\alpha$ is not an integer, as far as the present author is aware! To begin to look at the problem we propose an idea of quasi-convolution as follows: Let us recall that the concept  of convolution actually arose from the integral
\[
h(r^2e^{i\theta})=(f*g)(r^2e^{i\theta})=\frac{1}{2\pi}\int_0^{2\pi}f(re^{i(\theta-t)})g(re^{it})dt,\;\;\;r<1
\]
and that the integral convolution is defined by
\[
H(z)=\int_0^z\xi^{-1}h(\xi) d\xi,\;\;\;|\xi|<1
\]
(see \cite{PLD}). Now, since for $\alpha>0$, we can write $f(z)^\alpha$ and $g(z)^\alpha$ as $f(z)^\alpha=z^\alpha+A_2(\alpha)z^{\alpha+1}+...$ and $g(z)^\alpha=z^\alpha+B_2(\alpha)z^{\alpha+1}+...$ (where $A_k(\alpha)$, $B_k(\alpha)$ respectively depend on the coeficients $a_k$ of $f(z)$ and $b_k$ of $g(z)$, and $\alpha$), we can define the following integrals.
\begin{equation}
\phi(r^2e^{i\theta})^\alpha=(f^\alpha*g^\alpha)(r^2e^{i\theta})=\frac{1}{2\pi}\int_0^{2\pi}f(re^{i(\theta-t)})^\alpha g(re^{it})^\alpha dt,\;\;\;r<1\, \label{3}
\end{equation}
and 
\begin{equation}
\Phi(z)^\alpha=\alpha\int_0^z\xi^{-1}\phi(\xi)^\alpha d\xi,\;\;\;|\xi|<1.\, \label{4}
\end{equation}

By the integrals ~(\ref{3}) and ~(\ref{4}), we define the following new concepts:

\medskip

 {\sc Definition 1.} Let $f$, $g\in\mathbb{A}$. Let $\alpha>0$ be real. We define the quasi-convolution of $f(z)$ and $g(z)$ (denoted by $\phi(z)^\alpha=(f^\alpha*g^\alpha)(z)$) as
\begin{equation}
\phi(z)^\alpha=(f^\alpha*g^\alpha)(z)=z^\alpha+\sum_{k=2}^\infty A_k(\alpha)B_k(\alpha)z^{\alpha+k-1},\, \label{5}
\end{equation}
and the integral quasi-convolution is correspondingly defined by
\[
\Phi(z)^\alpha=\alpha\int_0^z\xi^{-1}\phi(\xi)^\alpha d\xi,\;\;\;|\xi|<1.
\]

This is my thinking! The justification for it lies in some very interesting applications, which we provide in Section 4. Earlier works involving quasi-convolution of analytic functtions can be found in the literatures \cite{KOS, OS}. For $\alpha=1$, we have the well known convolution (Hadamard product) of analytic functions.

Throughout this paper, $\phi(z)$ will be defined by the integral ~(\ref{3}) having series expansion ~(\ref{5}) and we will be investigating the class $T_n^\alpha (\beta)$ under $\phi(z)$, for two cases, namely, (i) $g(z)$ convex, $f\in T_n^\alpha(\beta)$, and (ii) $f$, $g\in T_n^\alpha(\beta)$. Our results are contained in Section 3, followed by some nice applications in Section 4. In the next section we give some preliminary lemmas and notes.

\section{Some Lemmas and Notes}
\medskip

 {\sc Definition 2.} Let $u=u_1+u_2i$, $v=v_1+v_2i$. Define $\Psi$ as the set of functions $\psi(u,v):\mathbb{C}\times\mathbb{C}\to\mathbb{C}$ satisfying:
 
{\rm(a)} $\psi(u,v)$ is continuous in a domain $\Omega$ of $\mathbb{C}\times\mathbb{C}$,

{\rm(b)} $(1,0)\in\Omega$ and Re$\psi(1,0)>0$,

{\rm(c)} Re$\psi(u_2i, v_1)\leq 0$ when $(u_2i, v_1)\in\Omega$ and $v_1\leq -\tfrac{1}{2}(1+u_2^2)$.

Several examples of members of the set $\Psi$ have been mentioned in \cite{BO,MM} and \cite[p.27]{SS}. We shall need the following member:
\[
\psi(u,v)=\frac{1}{2}+\frac{v}{\alpha(1+u)} 
\]
where $0<\alpha\leq 1$ and $\Omega=[\mathbb{C}-\{-1\}]\times\mathbb{C}$. To see this, observe that $\psi$ is continous on $\Omega$, $(1,0)\in\Omega$ and Re $\psi(1,0)>0$, and furthermore Re $\psi(u_2i,v_1)=\tfrac{1}{2}+\tfrac{v_1}{\alpha(1+u_2^2)}$. Then if $v_1\leq -\tfrac{1}{2}(1+u_2^2)$ we get Re $\psi(u_2i,v_1)=\tfrac{\alpha-1}{2}\leq 0$. Thus $\psi\in\Psi$.
\medskip

 {\sc Definition 3.} Let $\psi\in\Psi$ with corresponding domain $\Omega$. Define $P(\Psi)$ as the set of functions $p(z)$ given as $p(z)=1+c_1z+ c_2z^2+...$ which are regular in $E$ and satisfy:
 
{\rm(i)} $(p(z),zp^\prime(z))\in\Omega$

{\rm(ii)} Re$\psi(p(z),zp^\prime(z))>0$ when $z\in E$.

More general concepts were discussed in \cite{BO,MM,SS}. 

\begin{lemma}[\cite{BO,MM,SS}]
Let $p\in P(\Psi)$. Then Re $p(z)>0$.
\end{lemma}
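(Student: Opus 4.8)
The plan is to argue by contradiction, exploiting the boundary behaviour of $p$ together with the structural conditions (a)--(c) defining $\Psi$. Since $p(0)=1$ has positive real part and $p$ is regular, hence continuous, in $E$, a failure of the conclusion would force the existence of a smallest radius $r_0\in(0,1)$ and a point $z_0$ with $|z_0|=r_0$ at which Re $p(z_0)=0$ while Re $p(z)>0$ for every $|z|<r_0$. At such a point $p(z_0)$ is purely imaginary, so I may write $p(z_0)=u_2 i$ with $u_2$ real, and the whole argument will come down to locating the pair $(p(z_0),z_0p^\prime(z_0))$ precisely enough to invoke condition (c).

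The crucial step is to control $z_0p^\prime(z_0)$. To this end I would introduce the substitution $w(z)=\frac{1-p(z)}{1+p(z)}$, which carries the right half-plane onto the unit disk. Then $w$ is regular near $z_0$ with $w(0)=0$, and the hypotheses Re $p>0$ for $|z|<r_0$ and Re $p(z_0)=0$ translate into $|w(z)|<1$ for $|z|<r_0$ and $|w(z_0)|=1$; that is, $|w|$ attains its maximum on $|z|=r_0$ at $z_0$. By Jack's lemma this forces $z_0w^\prime(z_0)=k\,w(z_0)$ for some real $k\geq 1$. Inverting gives $p=\frac{1-w}{1+w}$ and $zp^\prime=\frac{-2zw^\prime}{(1+w)^2}$; writing $w(z_0)=e^{i\phi}$ and substituting $z_0w^\prime(z_0)=ke^{i\phi}$, a short computation (using $1+e^{i\phi}=2\cos(\phi/2)e^{i\phi/2}$) yields $p(z_0)=-i\tan(\phi/2)$ and $z_0p^\prime(z_0)=\frac{-k}{1+\cos\phi}$. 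In particular $z_0p^\prime(z_0)$ is real; call it $v_1$. Since $1+u_2^2=\sec^2(\phi/2)=\frac{2}{1+\cos\phi}$ and $k\geq 1$, this gives exactly $v_1\leq -\tfrac{1}{2}(1+u_2^2)$.

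Finally I would feed these facts into the definition of $\Psi$. By condition (i) the pair $(p(z_0),z_0p^\prime(z_0))=(u_2 i,v_1)$ lies in $\Omega$, and by the inequality just established condition (c) yields Re $\psi(u_2 i,v_1)\leq 0$. But condition (ii) of $P(\Psi)$ demands Re $\psi(p(z_0),z_0p^\prime(z_0))>0$, a contradiction. Hence no such $z_0$ can exist, and Re $p(z)>0$ throughout $E$.

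I expect the main obstacle to be the Jack's lemma step, namely showing that at the extremal point $z_0$ the quantity $z_0p^\prime(z_0)$ is real and bounded above by $-\tfrac{1}{2}(1+u_2^2)$. This is where the geometry of the boundary point is matched to the admissibility condition (c), and it is the heart of the proof; the reduction to a first contact point and the concluding contradiction are routine by comparison.
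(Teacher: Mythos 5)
Your proposal is correct, but a comparison with ``the paper's proof'' is moot here: the paper does not prove Lemma 1 at all, it simply quotes it from \cite{BO,MM,SS} (it is the Miller--Mocanu fundamental lemma on admissible functions). What you have reconstructed is, in essence, the standard argument from those cited sources: reduce to a first-contact point $z_0$ with Re $p(z_0)=0$ and Re $p(z)>0$ for $|z|<|z_0|$; show via Jack's lemma applied to $w=(1-p)/(1+p)$ that at such a point $z_0p^\prime(z_0)$ is real and satisfies $z_0p^\prime(z_0)\leq -\tfrac{1}{2}(1+u_2^2)$, where $p(z_0)=u_2i$; then play condition (c) of Definition 2 against condition (ii) of Definition 3 to reach a contradiction. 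Your computations check out: with $w(z_0)=e^{i\phi}$ and $z_0w^\prime(z_0)=ke^{i\phi}$, $k\geq 1$, one indeed gets $p(z_0)=-i\tan(\phi/2)$ and $z_0p^\prime(z_0)=-k/(1+\cos\phi)=-\tfrac{k}{2}(1+u_2^2)\leq-\tfrac{1}{2}(1+u_2^2)$. Two small points you should make explicit to finish the argument. First, Jack's lemma needs $w$ analytic on the closed disk $|z|\leq r_0$, i.e.\ $p\neq -1$ there; this follows from the minimality of $r_0$: if Re $p<0$ at some point of the circle $|z|=r_0$, the intermediate value theorem along the radius to that point would produce a zero of Re $p$ at modulus strictly less than $r_0$, a contradiction, so Re $p\geq 0$ on $|z|\leq r_0$ and $w$ is analytic in a neighbourhood of that closed disk. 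Second, Jack's lemma requires $w$ nonconstant, so dispose of the constant case first (then $p\equiv p(0)=1$ and the conclusion is trivial). With these remarks added, your proof is complete and is precisely the argument the paper imports by citation.
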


\begin{lemma}[\cite{FMA}]
If $p(z)$ is analytic in $E$, $p(0)=1$ and Re $p(z)>1/2$, $z\in E$, then for \underline{any} function $q(z)$ analytic in $E$, the convolution $p*q$ takes its values in the convex hull of $q(E)$.
\end{lemma}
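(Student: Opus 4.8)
The plan is to reduce the statement to a Herglotz-type integral representation of functions whose real part exceeds $1/2$, and then to exploit the fact that Hadamard convolution against a Cauchy kernel is nothing more than a dilation of the argument. First I would renormalize the hypothesis: setting $P(z)=2p(z)-1$, the conditions $p(0)=1$ and $\operatorname{Re}p(z)>1/2$ become $P(0)=1$ and $\operatorname{Re}P(z)>0$ on $E$. By the classical Riesz--Herglotz representation for functions of positive real part, there is a non-negative Borel measure $\mu$ on the circle $|x|=1$ of total mass $P(0)=1$ (a probability measure) with
\[
P(z)=\int_{|x|=1}\frac{1+xz}{1-xz}\,d\mu(x).
\]
Unwinding $p=\tfrac12(P+1)$ then yields the clean representation
\[
p(z)=\int_{|x|=1}\frac{1}{1-xz}\,d\mu(x),\qquad z\in E,
\]
so that the Taylor coefficients of $p$ are exactly the moments $c_k=\int_{|x|=1}x^k\,d\mu(x)$, with $c_0=\int d\mu=1$.

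The second step is the key algebraic observation. Writing $q(z)=\sum_{k\ge0}b_kz^k$, the Hadamard product of $q$ with the Cauchy kernel $\tfrac{1}{1-xz}=\sum_{k\ge0}x^kz^k$ simply rescales the variable:
\[
\Bigl(\tfrac{1}{1-x\,\cdot}*q\Bigr)(z)=\sum_{k\ge0}x^kb_kz^k=\sum_{k\ge0}b_k(xz)^k=q(xz).
\]
Combining this with the moment description of the coefficients $c_k$, I would compute
\[
(p*q)(z)=\sum_{k\ge0}c_kb_kz^k=\sum_{k\ge0}\Bigl(\int_{|x|=1}x^k\,d\mu(x)\Bigr)b_kz^k=\int_{|x|=1}q(xz)\,d\mu(x).
\]

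The main technical point, and the step I expect to require the most care, is justifying the interchange of summation and integration in the last display. For a fixed $z\in E$ one has $|xz|=|z|<1$ for every $x$ on the unit circle, and since $q$ is analytic in $E$ its power series converges absolutely there; hence $\sum_{k}|b_k|\,|z|^k<\infty$ dominates the integrand uniformly in $x$, and Fubini (equivalently, dominated convergence applied to the partial sums) legitimizes the exchange. Finally, for fixed $z\in E$ the point $xz$ lies in $E$ for every $|x|=1$, so $q(xz)\in q(E)$; the value $(p*q)(z)$ is therefore a $\mu$-average of points of $q(E)$ against a probability measure, and thus lies in the closed convex hull of $q(E)$. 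Approximating $\mu$ weakly by finitely supported probability measures exhibits this average as a limit of genuine convex combinations $\sum_j \lambda_j q(x_jz)$, which makes the membership in the convex hull explicit and completes the argument.
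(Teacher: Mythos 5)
The paper itself offers no proof of this lemma: it is quoted from Al-Oboudi \cite{FMA} and used as a black box in Theorems 2--4, so there is nothing internal to compare your argument against. Your proof is, in substance, the classical proof of this convolution theorem: renormalize to $P=2p-1$, invoke the Riesz--Herglotz representation to write $p(z)=\int_{|x|=1}(1-xz)^{-1}\,d\mu(x)$ with $\mu$ a probability measure, observe that Hadamard convolution with the Cauchy kernel is the rotation $q(z)\mapsto q(xz)$, and conclude that $(p*q)(z)=\int_{|x|=1}q(xz)\,d\mu(x)$ is an average of values of $q$. The representation, the coefficient identity $c_k=\int_{|x|=1} x^k\,d\mu(x)$, and the Fubini interchange (dominated by $\sum_k|b_k|\,|z|^k<\infty$ for fixed $z\in E$, uniformly in $x$) are all correct as written.

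There is one genuine, though easily repaired, gap at the end. What your averaging argument delivers is membership in the \emph{closed} convex hull of $q(E)$, and your final sentence does not remove the closure: a limit of convex combinations need not itself be a convex combination, so approximating $\mu$ weakly by finitely supported measures again proves only the closed-hull statement. The distinction is not cosmetic here: for nonconstant $q$ the set $q(E)$ is open by the open mapping theorem, hence so is its convex hull, and the paper applies the lemma precisely to preserve \emph{strict} inequalities ($\mathrm{Re}>\beta$, not $\mathrm{Re}\geq\beta$) in Theorems 2--4. The fix is short: for fixed $z$ the values $q(xz)$, $|x|=1$, form a compact set $S_z\subset q(E)$; by Carath\'eodory's theorem $\mathrm{co}(S_z)$ is compact, hence closed, and a standard separation argument shows that the barycenter of any probability measure supported on $S_z$ lies in $\mathrm{co}(S_z)$. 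Since $\mathrm{co}(S_z)\subseteq\mathrm{co}(q(E))$, the value $(p*q)(z)$ lies in the convex hull proper, as the statement claims. (Alternatively, one can keep the closed-hull conclusion and recover strict inequalities in the applications via the minimum principle for harmonic functions.) With that one amendment your proof is complete.
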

\medskip

 {\sc Definition 4.(\cite{FMA})} An infinite sequence $a_0,a_1,...,a_k,...$ of nonnegative numbers is said to be a {\em convex null sequence} if $a_k\rightarrow 0$ as $k\rightarrow\infty$ and $a_0-a_1\geq a_1-a_2\geq ...\geq a_k-a_{k+1}\geq...\geq 0$.
\begin{lemma}[\cite{FMA}]
Let $\{c_k\}_{k=0}^\infty$ be a convex null sequence. Then the function $p(z)=c_0/2+c_1z+c_2z^2+...$, $z\in E$, is analytic in $E$ and Re $p(z)>0$.
\end{lemma}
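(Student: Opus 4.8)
The plan is to represent $p(z)$ as a convex-type combination of Cesàro means of the geometric series and to exploit the classical nonnegativity of the Fej\'er kernel. Writing $\Delta c_k = c_k - c_{k+1}$ and $\Delta^2 c_k = c_k - 2c_{k+1} + c_{k+2}$, the hypothesis that $\{c_k\}$ is a convex null sequence gives $c_k\ge 0$, $\Delta c_k\ge 0$, $\Delta^2 c_k\ge 0$, together with $c_k\to 0$ and (by monotonicity plus summability of $\{\Delta c_k\}$) $k\,\Delta c_k\to 0$. First I would apply Abel's summation by parts twice to $\sum_{k=0}^\infty c_k z^k$, using as successive partial-summation kernels $\sum_{i=0}^k z^i$ and then $\sum_{i=0}^k (k+1-i)z^i=(k+1)\sigma_k(z)$, where
\[
\sigma_k(z)=\sum_{i=0}^k\Bigl(1-\tfrac{i}{k+1}\Bigr)z^i
\]
is the $k$-th Ces\`aro mean of $1/(1-z)$. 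The two boundary terms vanish as the upper index tends to infinity precisely because $c_k\to 0$ and $k\,\Delta c_k\to 0$ for $|z|<1$, and a short computation (the telescoping identity $\sum_{k}(k+1)\Delta^2 c_k=c_0$) lets me absorb the constant $c_0/2$, yielding
\[
p(z)=\sum_{k=0}^\infty (k+1)\,\Delta^2 c_k\left(\sigma_k(z)-\tfrac12\right),\qquad z\in E.
\]
Since $|\sigma_k(z)|\le (1-|z|)^{-1}$ uniformly in $k$ and $\sum_k (k+1)\Delta^2 c_k=c_0<\infty$, this series converges absolutely and uniformly on compact subsets of $E$, so its real part may be computed termwise.

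Next I would prove that each mean satisfies $\operatorname{Re}\sigma_k(z)>\tfrac12$ on $E$. On the boundary $z=e^{i\theta}$ one has the classical identity
\[
\operatorname{Re}\sigma_k(e^{i\theta})-\tfrac12=\tfrac12+\sum_{i=1}^k\Bigl(1-\tfrac{i}{k+1}\Bigr)\cos i\theta=\frac{1}{2(k+1)}\left(\frac{\sin\frac{(k+1)\theta}{2}}{\sin\frac{\theta}{2}}\right)^2\ge 0,
\]
i.e. the Fej\'er kernel, which is nonnegative. Since $\sigma_k-\tfrac12$ is a polynomial, $\operatorname{Re}(\sigma_k-\tfrac12)$ is harmonic on $E$ and continuous on $\overline{E}$ with nonnegative boundary values; representing it by the Poisson integral (whose kernel is strictly positive) shows it is $>0$ throughout $E$ for $k\ge 1$, while for $k=0$ it equals $\tfrac12$.

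Finally, taking real parts termwise in the representation gives
\[
\operatorname{Re}p(z)=\sum_{k=0}^\infty (k+1)\,\Delta^2 c_k\,\operatorname{Re}\!\left(\sigma_k(z)-\tfrac12\right)\ge 0,
\]
where every summand is nonnegative because $\Delta^2 c_k\ge 0$; the inequality is strict for each $z\in E$ unless all $\Delta^2 c_k$ vanish, the degenerate case $p\equiv 0$. I expect the main obstacle to be the second step: upgrading the boundary nonnegativity of the Fej\'er kernel to strict positivity of $\operatorname{Re}\sigma_k$ in the open disk, and, just before it, the careful bookkeeping that forces the two boundary terms of the double summation by parts to vanish. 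Both hinge on the convexity and null hypotheses, entering through $\Delta^2 c_k\ge 0$ and $k\,\Delta c_k\to 0$.
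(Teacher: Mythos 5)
The paper offers no proof of this lemma at all --- it is quoted verbatim from \cite{FMA}, where it goes back to Fej\'er's classical theorem on convex null sequences --- so there is no internal argument to compare yours against; your proposal must stand on its own, and it does. It is essentially the classical proof: the double Abel summation with boundary terms killed by $c_k\to 0$ and $k\,\Delta c_k\to 0$ (the latter correctly deduced from monotonicity together with summability of $\{\Delta c_k\}$), the telescoping identity $\sum_{k}(k+1)\Delta^2 c_k=c_0$ used to absorb the constant $c_0/2$, the Fej\'er-kernel identity giving $\operatorname{Re}\sigma_k(e^{i\theta})\ge\tfrac12$ on the boundary, and the Poisson-integral (minimum-principle) upgrade to the strict inequality $\operatorname{Re}\sigma_k(z)>\tfrac12$ inside the disk are all sound, and the uniform bound $|\sigma_k(z)|\le(1-|z|)^{-1}$ together with $\sum_k(k+1)\Delta^2 c_k=c_0<\infty$ legitimately justifies termwise passage to real parts. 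The only caveat is the one you yourself flag: the identically zero sequence is a convex null sequence under the paper's Definition 4, and for it $p\equiv 0$, so $\operatorname{Re}p(z)>0$ fails; the lemma as stated tacitly assumes the sequence is not identically zero, and your proof correctly isolates exactly this degenerate case (all $\Delta^2 c_k=0$ forces, via $\Delta c_k\to 0$ and $c_k\to 0$, that $c_k\equiv 0$).
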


If $a$, $b$ are nonzero positive real numbers such that $a>b$, it can be shown by simple inductive process that 
\[
(a-b)^m\leq a^m-b^m,\;\;\;m\in\mathbb{N}.
\]

By this it can be easily seen that the infinite sequence $\left\{d_k\right\}_{k=0}^\infty$
where
\[
d_k=\frac{\alpha^m}{(\alpha+k)^m},\;\;\alpha>0,\;\;m\in\mathbb{N}
\]
is convex null. In fact it has been mentioned in \cite{BO} that the sequence preserves many geometric structures of analytic functions, particularly starlikeness, convexity and subordination. In this article we would make use of the convex null sequence $\left\{d_k\right\}_{k=0}^\infty$ in the investigation of convolution properties of functions of the class $T_n^\alpha(\beta)$. 

We now turn to the main result.

\section{Quasi-Convolution}
First we prove

\begin{theorem}
Let $0<\alpha\leq 1$. If $g(z)$ is convex, then Re $g(z)^\alpha/z^\alpha>1/2$.
\end{theorem}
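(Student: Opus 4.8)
The plan is to pass from the multiplicative statement about $g(z)^\alpha/z^\alpha$ to an additive one to which the membership criterion of Definition 3 and Lemma 1 apply. First I would set $p(z)=g(z)^\alpha/z^\alpha=(g(z)/z)^\alpha$ (principal branch); since a convex $g$ is univalent, $g(z)/z$ is non-vanishing on $E$ with $g(z)/z\to 1$ as $z\to 0$, so $p$ is analytic in $E$ with $p(0)=1$. Because the desired conclusion is $\operatorname{Re}p(z)>1/2$ rather than $\operatorname{Re}p(z)>0$, I would not feed $p$ itself into Lemma 1 but rather the shifted function $P(z)=2p(z)-1$, which is again analytic of the form $1+c_1z+\cdots$ and satisfies $\operatorname{Re}P>0$ if and only if $\operatorname{Re}p>1/2$.

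Next I would verify that $P\in P(\Psi)$ for the specific $\psi(u,v)=\tfrac12+v/(\alpha(1+u))$ exhibited after Definition 2, which was already checked to lie in $\Psi$ for $0<\alpha\le 1$. The condition $(P(z),zP'(z))\in\Omega=[\mathbb{C}-\{-1\}]\times\mathbb{C}$ amounts to $P(z)\neq -1$, i.e.\ $p(z)\neq 0$, which holds since $(g(z)/z)^\alpha$ never vanishes. The heart of the reduction is the logarithmic derivative: differentiating $\log p=\alpha(\log g-\log z)$ gives $zp'(z)/p(z)=\alpha\bigl(zg'(z)/g(z)-1\bigr)$. Substituting $1+P=2p$ and $zP'=2zp'$ then collapses the expression cleanly, namely
\[
\psi\bigl(P(z),zP'(z)\bigr)=\frac12+\frac{zP'(z)}{\alpha(1+P(z))}=\frac12+\frac{zp'(z)}{\alpha\,p(z)}=\frac{zg'(z)}{g(z)}-\frac12 .
\]
Hence $\operatorname{Re}\psi(P,zP')>0$ is equivalent to $\operatorname{Re}\bigl(zg'(z)/g(z)\bigr)>1/2$.

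This last inequality is exactly the assertion that a convex function is starlike of order $1/2$, and establishing it is where I expect the real work (or the appeal to a known result) to lie: it is the classical Marx--Strohh\"acker phenomenon, so I would either cite it or derive $\operatorname{Re}(zg'/g)>1/2$ from the convexity hypothesis $\operatorname{Re}(1+zg''/g')>0$ as a separate step. Granting it, both membership conditions of Definition 3 hold, so $P\in P(\Psi)$ and Lemma 1 yields $\operatorname{Re}P(z)>0$, that is $\operatorname{Re}\bigl(g(z)^\alpha/z^\alpha\bigr)>1/2$, as required. Beyond that cited fact, the only points needing care are the analyticity and correct branch of the power $(g/z)^\alpha$ and the bookkeeping in the logarithmic-derivative reduction; the differential-subordination engine of Lemma 1 does the rest.
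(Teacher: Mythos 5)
Your proposal is correct and takes essentially the same route as the paper: your shifted function $P(z)=2g(z)^\alpha/z^\alpha-1$ is exactly what the paper calls $p(z)$, your logarithmic-derivative identity $\psi\bigl(P(z),zP'(z)\bigr)=zg'(z)/g(z)-\tfrac12$ is the paper's key computation, and both arguments then combine the Marx--Strohh\"acker fact (convex implies starlike of order $\tfrac12$) with the specific $\psi\in\Psi$ and Lemma 1. The only difference is cosmetic: you make explicit the branch/analyticity of $(g/z)^\alpha$ and the domain condition $P(z)\neq-1$, which the paper leaves implicit.
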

\begin{proof}
It is known that if $g(z)$ is convex, then it is starlike of order $\tfrac{1}{2}$. Let $0<\alpha\leq 1$ and define
\[
p(z)=2\frac{g(z)^\alpha}{z^\alpha}-1.
\]
Then
\[
\frac{zg^\prime(z)}{g(z)}-\frac{1}{2}=\frac{1}{2}+\frac{zp^\prime(z)}{\alpha(1+p(z))}.
\]
Let $\psi(u,v)$ be defined on a domain $\Omega=[\mathbb{C}-\{-1\}]\times\mathbb{C}$ by $\psi(u,v)=\frac{1}{2}+\tfrac{v}{\alpha(1+u)}$ where $u=p(z)$ and $v=zp^\prime(z)$ and $0<\alpha\leq 1$. Thus by Lemma 1 we have Re $(zg^\prime(z)/g(z)-1/2)>0\Rightarrow$ Re $p(z)>0$ and consequently we have
\[
Re\left\{\frac{(zg^\prime(z))^\prime}{g^\prime(z)}\right\}>0\Rightarrow Re\frac{zg^\prime(z)}{g(z)}>\frac{1}{2}\Rightarrow Re\frac{g(z)^\alpha}{z^\alpha}>\frac{1}{2},
\]
which completes the proof.
\end{proof}

The above proof is adapted from \cite{KOB} for completeness. For $\alpha=1$, the result can be found in literatures (see \cite{MM} for example).

\begin{theorem}
Let $f\in T_n^\alpha(\beta)$ and $g\in C$. If $0<\alpha\leq 1$, then $\phi\in T_n^\alpha(\beta)$, that is
\[
Re\frac{D^n\phi(z)^\alpha}{\alpha^nz^\alpha}>\beta.
\]
\end{theorem}
\begin{proof}
Since $g(z)$ is convex, then for $0<\alpha\leq 1$, we have Re $g(z)^\alpha/z^\alpha>1/2$. Hence by Lemma 2, the normalized analytic function defined by
\[
\frac{D^nf(z)^\alpha}{\alpha^nz^\alpha}*\frac{g(z)^\alpha}{z^\alpha}=\frac{D^n(f(z)^\alpha*g(z)^\alpha)}{\alpha^nz^\alpha}=\frac{D^n\phi(z)^\alpha}{\alpha^nz^\alpha}
\]
takes values in the convex hull of the image of $E$ under $\tfrac{D^nf(z)^\alpha}{\alpha^nz^\alpha}$, hence
\[
Re\frac{D^n\phi(z)^\alpha}{\alpha^nz^\alpha}>\beta,
\]
that is $\phi\in T_n^\alpha(\beta)$.
\end{proof}
\begin{theorem}
Let $f\in T_n^\alpha(\beta)$ and $g\in T_m^\alpha(\lambda)$, $1/2\leq\beta+\gamma<3/2$. Then $\phi\in T_n^\alpha(\beta+\lambda-\tfrac{1}{2})$, that is
\[
Re\frac{D^n\phi(z)^\alpha}{\alpha^nz^\alpha}>\frac{2(\beta+\lambda)-1}{2}.
\]
\end{theorem}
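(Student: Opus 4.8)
The plan is to follow the strategy of Theorem 2, but with the convexity of $g$ replaced by the weaker information coming from $g\in T_m^\alpha(\lambda)$. First I would record the convolution identity already exploited in Theorem 2: reading off the Salagean coefficients (on $z^{\alpha+k-1}$ the operator $D^n$ multiplies by $(\alpha+k-1)^n$, and the coefficient of $z^{\alpha+k-1}$ in $\phi(z)^\alpha$ is $A_k(\alpha)B_k(\alpha)$) one checks
\[
\frac{D^n\phi(z)^\alpha}{\alpha^nz^\alpha}=\frac{D^nf(z)^\alpha}{\alpha^nz^\alpha}*\frac{g(z)^\alpha}{z^\alpha}.
\]
This reduces everything to a Hadamard product of the two normalized functions $F:=D^nf^\alpha/(\alpha^nz^\alpha)$ and $G:=g^\alpha/z^\alpha$, which satisfy $F(0)=G(0)=1$ and $\mathrm{Re}\,F>\beta$.

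The second step is to turn the hypothesis $g\in T_m^\alpha(\lambda)$ into a statement about $G$, and this is where the convex null sequence $d_k=\alpha^m/(\alpha+k)^m$ enters. Since $B_k(\alpha)$ is recovered from the coefficients of $D^mg^\alpha/(\alpha^mz^\alpha)$ by multiplication with $\alpha^m/(\alpha+k-1)^m$, one has $G=Q*\bigl(D^mg^\alpha/(\alpha^mz^\alpha)\bigr)$ with $Q(z)=1+\sum_{k\ge1}d_kz^k$. Lemma 3 applied to $\{d_k\}$ gives $\mathrm{Re}\,Q>\tfrac12$, so by Lemma 2 the function $G$ takes its values in the convex hull of the image of $D^mg^\alpha/(\alpha^mz^\alpha)$; since the latter has real part exceeding $\lambda$, we conclude $\mathrm{Re}\,G>\lambda$. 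This is the exact analogue of Theorem 1, with $\lambda$ in place of $\tfrac12$.

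With $\mathrm{Re}\,F>\beta$ and $\mathrm{Re}\,G>\lambda$ in hand, I would write $F=(1-\beta)p_1+\beta$ and $G=(1-\lambda)p_2+\lambda$, where $p_1,p_2$ have positive real part and $p_1(0)=p_2(0)=1$. Because the Hadamard product is bilinear and fixes the constant term, this yields
\[
F*G=(1-\beta)(1-\lambda)\,(p_1*p_2)+\bigl[1-(1-\beta)(1-\lambda)\bigr],
\]
so the assertion $\mathrm{Re}(D^n\phi^\alpha/\alpha^nz^\alpha)>\beta+\lambda-\tfrac12$ becomes equivalent to the sharp estimate $\mathrm{Re}(p_1*p_2)>(\beta\lambda-\tfrac12)/[(1-\beta)(1-\lambda)]$.

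The hard part will be exactly this estimate on $p_1*p_2$. The crude bound suggested by the half-plane extremals, $\mathrm{Re}(p_1*p_2)>-1$, delivers only the weaker order $1-2(1-\beta)(1-\lambda)$, so the stated constant $\beta+\lambda-\tfrac12$ cannot be read off from the real-part data of $G$ alone. I therefore expect the decisive ingredient to be the additional smoothing carried by the factor $Q$ (equivalently, the decay of the coefficients $B_k(\alpha)$ forced by $g\in T_m^\alpha(\lambda)$ for $m\ge1$), which should be kept inside the application of Lemma 2 rather than discarded at the moment one passes to $\mathrm{Re}\,G>\lambda$. The hypothesis $\tfrac12\le\beta+\lambda<\tfrac32$ plays no role in the estimate itself; its purpose is to place the resulting order $\beta+\lambda-\tfrac12$ inside $[0,1)$, so that $\phi$ indeed belongs to a genuine class $T_n^\alpha(\cdot)$.
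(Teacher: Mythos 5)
Your proposal is not a complete proof, and you say so yourself: after the (correct) convolution identity and the (correct) deduction $\mathrm{Re}\,G>\lambda$ via the convex null sequence and Lemmas 2--3, everything hinges on the estimate $\mathrm{Re}(p_1*p_2)>(\beta\lambda-\tfrac12)/[(1-\beta)(1-\lambda)]$, which you never establish; you end with a conjecture that the coefficient decay forced by $m\ge 1$ ``should'' close the gap. Judged as a proof of the stated theorem, that is a genuine missing step.

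However, your obstruction analysis is sound, and comparing it with the paper's own argument is instructive. The paper finishes differently: from $\mathrm{Re}\,G>\lambda$ it forms $G-\lambda+\tfrac12$, which has real part exceeding $\tfrac12$, and applies Lemma 2 with this function in the role of $p$ and $F=D^nf^\alpha/(\alpha^nz^\alpha)$ in the role of $q$, concluding that $F*G-\lambda+\tfrac12$ lies in the convex hull of $F(E)$. But Lemma 2 requires $p(0)=1$, whereas $\bigl(G-\lambda+\tfrac12\bigr)(0)=\tfrac32-\lambda$, which equals $1$ only when $\lambda=\tfrac12$; the paper's parenthetical ``though not normalized'' flags this defect without repairing it. Your extremal computation shows the defect is fatal rather than cosmetic: from $\mathrm{Re}\,F>\beta$, $\mathrm{Re}\,G>\lambda$, $F(0)=G(0)=1$ alone, the sharp conclusion is
\[
\mathrm{Re}(F*G)>1-2(1-\beta)(1-\lambda)=\beta+\lambda-\tfrac12-\tfrac12(1-2\beta)(1-2\lambda),
\]
which is strictly below $\beta+\lambda-\tfrac12$ whenever $\beta$ and $\lambda$ lie on the same side of $\tfrac12$. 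Concretely, with $\alpha=1$, $n=m=0$, $\beta=\lambda=\tfrac{3}{10}$ (so $\beta+\lambda\ge\tfrac12$ holds), the functions $f=z\bigl(1+\tfrac25 z\bigr)/(1-z)$ and $g=z\bigl(1+\tfrac25 z\bigr)/(1-z)$ satisfy the hypotheses, yet $\mathrm{Re}\,\phi(z)/z$ comes arbitrarily close to $0.02<0.1=\beta+\lambda-\tfrac12$; since $m=0$ is permitted (Corollary 1 invokes it), the statement itself fails there. So your instinct is exactly right: a correct result must keep the smoothing that $m\ge1$ imposes on $G$ inside the convolution argument rather than discard it after deducing $\mathrm{Re}\,G>\lambda$ --- which is precisely what the paper's Theorem 4 does --- and the only case in which the paper's final step is sound as written is $\lambda=\tfrac12$ (Corollary 2, which is what the applications in Section 4 actually use).
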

\begin{proof}
Since the sequence $\{d_k\}_{k=0}^\infty$ is a convex null sequence, we have, by Lemma 3, Re $\varphi(z)^\alpha/z^\alpha>1/2$ where $\varphi(z)$ is defined by
\[
\frac{\varphi(z)^\alpha}{z^\alpha}=1+\sum_{k=2}^\infty\frac{\alpha^m}{(\alpha+k-1)^m}z^{k-1}.
\]
But
\[
\frac{D^mg(z)^\alpha}{\alpha^mz^\alpha}=1+\sum_{k=2}^\infty\frac{(\alpha+k-1)^m}{\alpha^m}b_k(\alpha)z^{k-1}.
\]
Hence
\begin{multline*}
\frac{D^mg(z)^\alpha}{\alpha^mz^\alpha}*\frac{\varphi(z)^\alpha}{z^\alpha}=\frac{D^m(g(z)^\alpha*\varphi(z)^\alpha)}{\alpha^mz^\alpha}\\
=1+\sum_{k=2}^\infty b_k(\alpha)z^{k-1}=\frac{g(z)^\alpha}{z^\alpha}.
\end{multline*}
Thus by Lemma 2, we have Re $g(z)^\alpha/z^\alpha>\lambda$, so that
\[
Re\left(\frac{g(z)^\alpha}{z^\alpha}-\lambda+\frac{1}{2}\right)>\frac{1}{2}
\]
By Lemma 2 again the analytic function (though not normalized) given by
\begin{multline*}
\frac{D^nf(z)^\alpha}{\alpha^nz^\alpha}*\left(\frac{g(z)^\alpha}{z^\alpha}-\lambda+\frac{1}{2}\right)\\=\frac{D^n(f(z)^\alpha*g(z)^\alpha)}{\alpha^nz^\alpha}-\lambda+\frac{1}{2}=\frac{D^n\phi(z)^\alpha}{\alpha^nz^\alpha}-\lambda+\frac{1}{2}
\end{multline*}
takes values in the convex hull of the image of $E$ under $\tfrac{D^nf(z)^\alpha}{\alpha^nz^\alpha}$, hence
\[
Re\left(\frac{D^n\phi(z)^\alpha}{\alpha^nz^\alpha}-\lambda+\frac{1}{2}\right)>\beta,
\]
so that
\[
Re\frac{D^n\phi(z)^\alpha}{\alpha^nz^\alpha}>\beta+\lambda-\frac{1}{2}.
\]
That is $\phi\in T_n^\alpha(\beta+\lambda-\tfrac{1}{2})$.
\end{proof}

\begin{corollary}
Let $f\in T_n^\alpha(\beta)$, $g\in T_0^\alpha(\lambda)$. Then $\phi\in T_n^\alpha(\beta+\lambda-\tfrac{1}{2})$.
\end{corollary}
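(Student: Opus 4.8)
The plan is to recognize this statement as the special case $m=0$ of Theorem 3 and to invoke (the second half of) that theorem's proof directly. First I would unwind the definition of the class at level zero: since the Salagean operator satisfies $D^0 g(z)^\alpha = g(z)^\alpha$ and $\alpha^0=1$, membership $g\in T_0^\alpha(\lambda)$ means precisely that
\[
Re\,\frac{g(z)^\alpha}{z^\alpha}>\lambda,\qquad z\in E.
\]
This is exactly the intermediate inequality that the convex-null-sequence portion of Theorem 3's proof was engineered to extract from the stronger hypothesis $g\in T_m^\alpha(\lambda)$; here the definition of $T_0^\alpha(\lambda)$ hands it to us at once, so that preliminary step (the auxiliary function $\varphi$ together with Lemma 3) is bypassed entirely.

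From here I would reproduce the closing argument of Theorem 3. Rewriting the inequality as $Re\left(g(z)^\alpha/z^\alpha-\lambda+\tfrac12\right)>\tfrac12$, I would apply Lemma 2 with the (not necessarily normalized) function $g(z)^\alpha/z^\alpha-\lambda+\tfrac12$ convolved against $D^nf(z)^\alpha/(\alpha^nz^\alpha)$. Because the first factor has real part exceeding $\tfrac12$, Lemma 2 guarantees that
\[
\frac{D^nf(z)^\alpha}{\alpha^nz^\alpha}*\left(\frac{g(z)^\alpha}{z^\alpha}-\lambda+\frac12\right)=\frac{D^n\phi(z)^\alpha}{\alpha^nz^\alpha}-\lambda+\frac12
\]
takes values in the convex hull of the image of $E$ under $D^nf(z)^\alpha/(\alpha^nz^\alpha)$. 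Since $f\in T_n^\alpha(\beta)$ gives $Re\,D^nf(z)^\alpha/(\alpha^nz^\alpha)>\beta$, the real part of the convolution likewise exceeds $\beta$, and hence $Re\,D^n\phi(z)^\alpha/(\alpha^nz^\alpha)>\beta+\lambda-\tfrac12$, which is the assertion $\phi\in T_n^\alpha(\beta+\lambda-\tfrac12)$ (with the range $\tfrac12\le\beta+\lambda<\tfrac32$ inherited from Theorem 3).

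I do not expect any genuine obstacle, as this is a direct specialization. The one subtlety worth flagging explicitly is why the statement must be phrased as a corollary rather than obtained by literally substituting $m=0$ into the proof of Theorem 3: at $m=0$ the sequence driving that argument degenerates to $d_k=\alpha^0/(\alpha+k)^0=1$ for every $k$, so $\{d_k\}$ is the constant sequence and fails to be a null sequence, leaving Lemma 3 and the function $\varphi$ inapplicable. The point is that precisely at $m=0$ this machinery is also unnecessary, since the defining condition of $T_0^\alpha(\lambda)$ already supplies $Re\,g(z)^\alpha/z^\alpha>\lambda$ outright; the corollary thus records the boundary case that the theorem itself cannot reach.
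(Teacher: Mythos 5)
Your proof is correct and takes essentially the same route as the paper: the paper gives no separate argument for this corollary, treating it as the $m=0$ instance of Theorem 3, and what you write is exactly the closing half of that theorem's proof, with the intermediate inequality Re $g(z)^\alpha/z^\alpha>\lambda$ supplied directly by the definition of $T_0^\alpha(\lambda)$ instead of by the convex-null-sequence argument. Your added observation that one cannot literally substitute $m=0$ into Theorem 3's proof (since $d_k\equiv 1$ is not a null sequence, making Lemma 3 inapplicable there) is accurate and worth recording; note only that your application of Lemma 2 to the non-normalized function $g(z)^\alpha/z^\alpha-\lambda+\tfrac{1}{2}$ carries the same looseness that the paper itself acknowledges in the proof of Theorem 3, so it is not a new gap introduced by you.
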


\begin{corollary}
Let $f\in T_n^\alpha(\beta)$, $g\in T_0^\alpha(\tfrac{1}{2})$. Then $\phi\in T_n^\alpha(\beta)$.
\end{corollary}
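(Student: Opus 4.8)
The plan is to derive this directly from the preceding results by specialization, since it is the sharpest instance of the machinery already developed. First I would observe that Corollary 1 is nothing but Theorem 3 with $m=0$: a function $g\in T_0^\alpha(\lambda)$ is precisely one satisfying $\mathrm{Re}\,(g(z)^\alpha/z^\alpha)>\lambda$, which is exactly the intermediate inequality extracted in the proof of Theorem 3. There the convex-null sequence $\{d_k\}$ is invoked only to reduce the general case $m\ge 1$ to this inequality; for $m=0$ that reduction is unnecessary and the inequality is already the hypothesis. Thus Corollary 1 gives $\phi\in T_n^\alpha(\beta+\lambda-\tfrac12)$ with no extra work.

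Next I would simply set $\lambda=\tfrac12$ in Corollary 1. The conclusion becomes $\phi\in T_n^\alpha(\beta+\tfrac12-\tfrac12)=T_n^\alpha(\beta)$, which is the claim. Before invoking Theorem 3 one should check that its admissibility hypothesis $\tfrac12\le\beta+\lambda<\tfrac32$ is met; with $\lambda=\tfrac12$ this reads $\tfrac12\le\beta+\tfrac12<\tfrac32$, i.e.\ $0\le\beta<1$, which holds by the very definition of the class $T_n^\alpha(\beta)$. Hence no additional restriction on $\beta$ is needed, and the specialization is legitimate for every admissible parameter.

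There is essentially no obstacle here: the statement is a clean corollary and the only thing to verify is the bookkeeping on the order together with the harmless hypothesis check above. The one point worth recording for the reader is the interpretation of $g\in T_0^\alpha(\tfrac12)$ as the condition $\mathrm{Re}\,(g(z)^\alpha/z^\alpha)>\tfrac12$, since this is what links the corollary back to the convex case. Indeed, by Theorem 1 every convex $g$ lies in $T_0^\alpha(\tfrac12)$ for $0<\alpha\le 1$, so Corollary 2 contains Theorem 2 as a special case and thereby unifies the two branches of the investigation announced in the introduction.
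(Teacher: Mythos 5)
Your proposal is correct and matches the paper's intent: the corollary is stated without proof precisely because it is the specialization $\lambda=\tfrac12$ of Corollary 1 (itself Theorem 3 with $m=0$, where the convex null sequence step collapses to the hypothesis $\mathrm{Re}\,(g(z)^\alpha/z^\alpha)>\lambda$), and your check that $\tfrac12\le\beta+\tfrac12<\tfrac32$ reduces to $0\le\beta<1$ is exactly the needed bookkeeping. Your closing observation that, via Theorem 1, this corollary subsumes Theorem 2 for $0<\alpha\le1$ is a nice remark consistent with how the paper itself uses Corollary 2 in Section 4.
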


For $0<\alpha\leq 1$ and $m\geq 1$, Theorem 3 can be improved as follows:

\begin{theorem}
Let $f\in T_n^\alpha(\beta)$ and $g\in T_m^\alpha(\lambda)$. Then for $0<\alpha\leq 1$ and $m\geq 1$ we have $\phi\in T_n^\alpha(\beta+\tfrac{\lambda}{2})\subset T_n^\alpha(\beta)$, that is
\[
Re\frac{D^n\phi(z)^\alpha}{\alpha^nz^\alpha}>\beta+\frac{\lambda}{2}.
\]
\end{theorem}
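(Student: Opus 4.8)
The plan is to mirror the two–stage convolution argument used for Theorems~2 and~3, but to feed into the final application of Lemma~2 a \emph{sharper} lower bound for $\mathrm{Re}\,g(z)^\alpha/z^\alpha$. Concretely, I would isolate and prove the key estimate that, for $0<\alpha\le1$ and $m\ge1$, membership $g\in T_m^\alpha(\lambda)$ forces $\mathrm{Re}\,g(z)^\alpha/z^\alpha>\tfrac{1+\lambda}{2}$. Granting this, the theorem follows just as Theorem~3 did: set $q(z)=\tfrac{g(z)^\alpha}{z^\alpha}-\tfrac{1+\lambda}{2}+\tfrac12$, so that $\mathrm{Re}\,q>\tfrac12$. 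Then the analytic (though not normalized) function $\tfrac{D^nf(z)^\alpha}{\alpha^n z^\alpha}*q(z)=\tfrac{D^n\phi(z)^\alpha}{\alpha^n z^\alpha}-\tfrac{1+\lambda}{2}+\tfrac12$ takes values, by Lemma~2, in the convex hull of the image of $E$ under $\tfrac{D^nf(z)^\alpha}{\alpha^n z^\alpha}$, whence its real part exceeds $\beta$ and therefore $\mathrm{Re}\,\tfrac{D^n\phi(z)^\alpha}{\alpha^n z^\alpha}>\beta+\tfrac{1+\lambda}{2}-\tfrac12=\beta+\tfrac{\lambda}{2}$, giving $\phi\in T_n^\alpha(\beta+\tfrac{\lambda}{2})\subset T_n^\alpha(\beta)$.

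To prove the key estimate I would exploit the elementary recursion $h_j=h_{j-1}+\tfrac{1}{\alpha}zh_{j-1}^\prime$, where $h_j:=\tfrac{D^jg(z)^\alpha}{\alpha^j z^\alpha}$ (so $h_0=g^\alpha/z^\alpha$ and $h_m$ carries the hypothesis $\mathrm{Re}\,h_m>\lambda$). First comes a \emph{monotonicity} step: writing $h_{j-1}=(1-\lambda)p+\lambda$ with $p(0)=1$, the assumption $\mathrm{Re}\,h_j>\lambda$ becomes $\mathrm{Re}(p+\tfrac{1}{\alpha}zp^\prime)>0$, and the choice $\psi(u,v)=u+\tfrac{v}{\alpha}$ lies in $\Psi$ for \emph{every} $\alpha>0$ (conditions (a),(b) are clear, and (c) holds since $\mathrm{Re}\,\psi(u_2 i,v_1)=v_1/\alpha\le0$ whenever $v_1\le-\tfrac12(1+u_2^2)$). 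By Lemma~1 this yields $\mathrm{Re}\,h_{j-1}>\lambda$; iterating from $j=m$ down to $j=1$ reduces everything to the single inequality $\mathrm{Re}(h_0+\tfrac{1}{\alpha}zh_0^\prime)>\lambda$. The crucial step is then to put $h_0=\tfrac{1+\lambda}{2}+\tfrac{1-\lambda}{2}p$ with $p(0)=1$, turning the hypothesis into $\mathrm{Re}(p+\tfrac{1}{\alpha}zp^\prime)>-1$, and to exhibit a function $\psi\in\Psi$ for which this is exactly the statement $\mathrm{Re}\,\psi(p,zp^\prime)>0$, so that Lemma~1 returns $\mathrm{Re}\,p>0$, i.e.\ $\mathrm{Re}\,h_0>\tfrac{1+\lambda}{2}$.

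The main obstacle is precisely the verification of condition~(c) of Definition~2 for the $\psi$ demanded by this crucial step, and it is here that the hypotheses $0<\alpha\le1$ and $m\ge1$ must genuinely be brought to bear. The naive linear candidate $\psi(u,v)=u+\tfrac{v}{\alpha}+1$ satisfies (a),(b), and on the boundary locus $v_1\le-\tfrac12(1+u_2^2)$ gives $\mathrm{Re}\,\psi(u_2 i,v_1)=1+\tfrac{v_1}{\alpha}\le1-\tfrac{1+u_2^2}{2\alpha}$, which is $\le0$ only when $1+u_2^2\ge2\alpha$ — hence unconditionally only for $\alpha\le\tfrac12$. Covering the full range $0<\alpha\le1$ requires building the extra negativity into $\psi$ (for instance through a Theorem~1–type denominator $(1+u)$), or retaining the additional damping supplied by the full operator $D^m$ rather than collapsing it to the case $m=1$. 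I expect this to be the delicate heart of the argument and the true point of departure from Theorem~3, since the soft convex-hull reasoning of Lemma~2 is an averaging and therefore cannot, on its own, push the bound on $\mathrm{Re}\,g^\alpha/z^\alpha$ beyond $\lambda$; the gain to $\tfrac{1+\lambda}{2}$ has to be extracted from the sharper differential inequality above.
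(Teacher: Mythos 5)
Your proposal follows the same two--stage plan as the paper's proof --- first sharpen the bound to $\mathrm{Re}\,g(z)^\alpha/z^\alpha>\tfrac{1+\lambda}{2}$, then rerun the Lemma~2 convolution step of Theorem~3 --- and everything you actually establish is correct: the recursion $h_j=h_{j-1}+\tfrac{1}{\alpha}zh_{j-1}^\prime$, the monotonicity step with $\psi(u,v)=u+\tfrac{v}{\alpha}\in\Psi$, and the final convex-hull step, which is verbatim the paper's (including the same mild abuse of Lemma~2 on a non-normalized factor, which the paper itself flags). But the proposal stops short of a proof: the crucial estimate is verified only for $0<\alpha\le\tfrac{1}{2}$, and you leave the range $\tfrac{1}{2}<\alpha\le 1$ as an expectation. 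That is a genuine gap, and it sits exactly at the theorem's point of difficulty. The paper fills this step by a different mechanism, never invoking Lemma~1: it takes the sequence $c_0=1$, $c_k=2\alpha^m/(\alpha+k)^m$ $(k\ge 1)$, asserts it is convex null for $0<\alpha\le 1$, $m\ge 1$, concludes from Lemma~3 that
\[
\frac{\varphi(z)^\alpha}{z^\alpha}=1+2\sum_{k=2}^\infty\frac{\alpha^m}{(\alpha+k-1)^m}z^{k-1}
\]
has real part greater than $\tfrac{1}{2}$, and then applies Lemma~2 to the identity
\[
\frac{D^mg(z)^\alpha}{\alpha^mz^\alpha}*\frac{\varphi(z)^\alpha}{z^\alpha}=2\frac{g(z)^\alpha}{z^\alpha}-1,
\]
obtaining $\mathrm{Re}\left(2g(z)^\alpha/z^\alpha-1\right)>\lambda$. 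This also refutes your closing heuristic that Lemma~2 ``cannot push the bound beyond $\lambda$'': doubling the coefficients of the convex-null sequence makes the convolution represent $2g^\alpha/z^\alpha-1$ rather than $g^\alpha/z^\alpha$, and that is precisely where the gain to $\tfrac{1+\lambda}{2}$ comes from.

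However, the obstacle you isolate at $m=1$ and $\alpha$ near $1$ is not an artifact of your method; it is fatal, and it defeats the paper's proof as well. For $m=1$, $\alpha=1$ the paper's sequence is $1,\,1,\,\tfrac{2}{3},\,\tfrac{1}{2},\dots$, with first differences $0,\,\tfrac{1}{3},\,\tfrac{1}{6},\dots$, so the condition $c_0-c_1\ge c_1-c_2$ fails and the sequence is not convex (for $m=1$ this happens for every $\alpha>(\sqrt{17}-3)/2\approx 0.56$); Lemma~3 is therefore unavailable and the paper's ``easily verified'' claim is false. Worse, the key estimate itself fails there: $g(z)=2\log(1+z)-z$ satisfies $\mathrm{Re}\,g^\prime(z)>0$, hence $g\in T_1^1(0)$, yet $\mathrm{Re}\,g(z)/z\to 2\log 2-1<\tfrac{1}{2}$ as $z\to 1^-$; and taking $f(z)=z/(1-z)\in T_0^1(\tfrac{1}{2})$, for which $\phi=f*g=g$, the conclusion of the theorem itself fails with $n=0$, $\beta=\tfrac{1}{2}$, $m=1$, $\alpha=1$, $\lambda=0$. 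So no admissible $\psi$ can close your gap once you have collapsed $D^m$ to the case $m=1$: the statement is salvageable only for $m\ge 2$ (where the paper's sequence genuinely is convex null, since then $4\alpha^m/(\alpha+1)^m\le 2^{2-m}\le 1$), or under your restriction $\alpha\le\tfrac{1}{2}$, where your differential-inequality route and the paper's sequence route both work for every $m\ge 1$.
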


\begin{proof}
Consider the sequence $\{c_k\}_{k=0}^\infty$ given as
\[
c_k=\left\{
\begin{array}{ll}
1&\mbox{if $k=0$},\\
\frac{2\alpha^m}{(\alpha+k)^m}&\mbox{if $k\geq 1$.}
\end{array}
\right.
\]
It is easily verified that the sequence is also convex null if $0<\alpha\leq 1$ and $m\geq 1$. Then we define $\varphi(z)$ by
\[
\frac{\varphi(z)^\alpha}{z^\alpha}=1+2\sum_{k=2}^\infty\frac{\alpha^m}{(\alpha+k-1)^m}z^{k-1},
\]
and following the same arguement as in Theorem 3, we have the result.
\end{proof}

\begin{corollary}
Let $f\in T_n^\alpha(\beta)$, $g\in T_1^\alpha(\lambda)$. Then for $0<\alpha\leq 1$, $\phi\in T_n^\alpha(\beta+\tfrac{\lambda}{2})$.
\end{corollary}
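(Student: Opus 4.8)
The plan is to reproduce the three–move structure of Theorem 3, the only new ingredient being that the auxiliary sequence carries an extra factor of $2$, which upgrades the intermediate estimate on $g(z)^\alpha/z^\alpha$ from Re $>\lambda$ to Re $>\tfrac12+\tfrac{\lambda}{2}$. First I would fix the sequence $\{c_k\}_{k=0}^\infty$ with $c_0=1$ and $c_k=2\alpha^m/(\alpha+k)^m$ for $k\ge1$, together with the function $\varphi$ determined by
\[
\frac{\varphi(z)^\alpha}{z^\alpha}=1+2\sum_{k=2}^\infty\frac{\alpha^m}{(\alpha+k-1)^m}z^{k-1}.
\]
Once $\{c_k\}$ is shown to be convex null, Lemma 3 applied to $\varphi(z)^\alpha/z^\alpha-\tfrac12=c_0/2+\sum_{k=1}^\infty c_kz^k$ yields Re $\varphi(z)^\alpha/z^\alpha>\tfrac12$.

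Next I would form the Hadamard product with the $m$-th Salagean transform of $g$. Writing $\frac{D^mg(z)^\alpha}{\alpha^mz^\alpha}=1+\sum_{k\ge2}\frac{(\alpha+k-1)^m}{\alpha^m}b_k(\alpha)z^{k-1}$, the factors $(\alpha+k-1)^m$ cancel against the reciprocals in $\varphi$, while the extra $2$ survives, so that
\[
\frac{D^mg(z)^\alpha}{\alpha^mz^\alpha}*\frac{\varphi(z)^\alpha}{z^\alpha}=1+2\sum_{k\ge2}b_k(\alpha)z^{k-1}=2\,\frac{g(z)^\alpha}{z^\alpha}-1.
\]
Since $\varphi(z)^\alpha/z^\alpha$ has value $1$ at the origin and real part exceeding $\tfrac12$, while $g\in T_m^\alpha(\lambda)$ gives Re $D^mg(z)^\alpha/(\alpha^mz^\alpha)>\lambda$, Lemma 2 places the product in the convex hull of that half-plane; hence Re $\bigl(2g(z)^\alpha/z^\alpha-1\bigr)>\lambda$, that is
\[
Re\,\frac{g(z)^\alpha}{z^\alpha}>\frac12+\frac{\lambda}{2}.
\]
This sharper bound, compared with Re $g(z)^\alpha/z^\alpha>\lambda$ in Theorem 3, is precisely what produces the improved order.

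The final move mirrors the second application of Lemma 2 in Theorem 3. From the last inequality, Re $\bigl(g(z)^\alpha/z^\alpha-\tfrac{\lambda}{2}\bigr)>\tfrac12$, so I would convolve this (not necessarily normalized) factor against $D^nf(z)^\alpha/(\alpha^nz^\alpha)$. Using linearity of the Hadamard product, the identity $\frac{D^n\phi(z)^\alpha}{\alpha^nz^\alpha}=\frac{D^nf(z)^\alpha}{\alpha^nz^\alpha}*\frac{g(z)^\alpha}{z^\alpha}$, and the fact that the Hadamard product of a constant with any series of unit constant term reproduces that constant, one obtains
\[
\frac{D^nf(z)^\alpha}{\alpha^nz^\alpha}*\left(\frac{g(z)^\alpha}{z^\alpha}-\frac{\lambda}{2}\right)=\frac{D^n\phi(z)^\alpha}{\alpha^nz^\alpha}-\frac{\lambda}{2}.
\]
Because $f\in T_n^\alpha(\beta)$ means Re $D^nf(z)^\alpha/(\alpha^nz^\alpha)>\beta$, Lemma 2 forces this quantity into the convex hull of the half-plane Re $>\beta$, whence Re $\bigl(D^n\phi(z)^\alpha/(\alpha^nz^\alpha)-\tfrac{\lambda}{2}\bigr)>\beta$ and therefore Re $D^n\phi(z)^\alpha/(\alpha^nz^\alpha)>\beta+\tfrac{\lambda}{2}$. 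The inclusion $T_n^\alpha(\beta+\tfrac{\lambda}{2})\subset T_n^\alpha(\beta)$ is then immediate since $\lambda\ge0$.

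The step I expect to be the genuine obstacle is the verification that $\{c_k\}$ is convex null, which is exactly where the hypotheses $0<\alpha\le1$ and $m\ge1$ must be invoked. Nonnegativity and $c_k\to0$ are clear, and the convexity of the tail differences for $k\ge1$ follows from the convexity of $x\mapsto x^{-m}$ (as already exploited for $\{d_k\}$). The delicate point is the \emph{first} difference inequality $c_0-c_1\ge c_1-c_2$, i.e.
\[
1+\frac{2\alpha^m}{(\alpha+2)^m}\ge\frac{4\alpha^m}{(\alpha+1)^m},
\]
since the jump from $c_0=1$ to the doubled tail is largest here. This estimate must be checked with care; it is the single place where the admissible range of $(\alpha,m)$ — and hence the conclusion itself — is actually decided, and I would treat it as the crux of the argument rather than a routine verification.
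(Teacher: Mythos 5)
Your proposal retraces the paper's own route: the corollary is the case $m=1$ of Theorem 4, whose proof introduces exactly your sequence $\{c_k\}$ and the doubled function $\varphi$, then repeats the two applications of Lemma 2 from Theorem 3. The problem is that the step you singled out as the crux --- the first-difference inequality $c_0-c_1\geq c_1-c_2$ --- and then left unverified is not merely delicate: it is false on most of the range the corollary claims. For $m=1$ your inequality $1+\tfrac{2\alpha}{\alpha+2}\geq\tfrac{4\alpha}{\alpha+1}$ reduces, after clearing denominators, to $\alpha^2+3\alpha-2\leq 0$, i.e. $\alpha\leq\tfrac{\sqrt{17}-3}{2}\approx 0.56$. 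Hence for $\alpha\in\bigl(\tfrac{\sqrt{17}-3}{2},1\bigr]$ the sequence is not convex null (at $\alpha=1$ one has $c_0-c_1=0$ while $c_1-c_2=\tfrac13$), Lemma 3 cannot be invoked, and the argument collapses; the paper's claim that the verification is ``easy'' for all $0<\alpha\leq 1$, $m\geq 1$ is wrong precisely in the case $m=1$ of this corollary (for $m\geq 2$ the inequality does hold, since its right side is $4(\alpha/(\alpha+1))^m\leq 4\cdot 2^{-m}\leq 1$ while its left side exceeds $1$). Nor is this an artifact of the method: for $\alpha=m=1$ one computes $\varphi(z)/z=-1-2\log(1-z)/z$, whose real part tends to $2\log 2-1<\tfrac12$ as $z\to -1$, so the intermediate estimate $\mathrm{Re}\,\varphi(z)^\alpha/z^\alpha>\tfrac12$ genuinely fails.

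In fact the statement itself, not just the proof, fails near $\alpha=1$: take $\alpha=1$, $n=0$, $\beta=\lambda=0$, $f(z)=z(1+z)/(1-z)\in T_0^1(0)$ and $g(z)=-z-2\log(1-z)\in T_1^1(0)$ (so $b_k=2/k$ and $\mathrm{Re}\,g'(z)>0$); then $\phi(z)=(f*g)(z)=-3z-4\log(1-z)$ and $\mathrm{Re}\,\phi(z)/z\to 4\log 2-3<0$ as $z\to -1$, so $\phi\notin T_0^1(0)$. Your instinct that this inequality is ``where the conclusion itself is actually decided'' was therefore exactly right, and more right than the paper; carrying the computation through shows the corollary survives this method only for $\alpha\leq\tfrac{\sqrt{17}-3}{2}$. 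A smaller blemish, inherited from the paper's Theorem 3: your final application of Lemma 2 uses the factor $g(z)^\alpha/z^\alpha-\tfrac{\lambda}{2}$, whose value at the origin is $1-\tfrac{\lambda}{2}\neq 1$, whereas Lemma 2 requires the factor with real part above $\tfrac12$ to be normalized; splitting off the constant ($p*q=(1-\tfrac{\lambda}{2})(\tilde p*q)+\tfrac{\lambda}{2}$ with $\tilde p$ normalized) repairs this, but then yields only the weaker bound $\beta+\tfrac{\lambda}{2}(1-\beta)$.
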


\begin{corollary}
Let $f$, $g\in T_n^\alpha(\beta)$. Then for $0<\alpha\leq 1$ and $n\geq 1$, $\phi\in T_n^\alpha(\beta)$.
\end{corollary}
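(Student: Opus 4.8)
The plan is to obtain this as the immediate special case of Theorem 4, so almost all of the work is already done. First I would observe that the hypotheses $f,g\in T_n^\alpha(\beta)$ fit the template of Theorem 4 upon setting $m=n$ and $\lambda=\beta$: indeed, the statement $g\in T_n^\alpha(\beta)$ is precisely $g\in T_m^\alpha(\lambda)$ under the identification $m=n$, $\lambda=\beta$. The side conditions required by Theorem 4, namely $0<\alpha\leq 1$ and $m\geq 1$, are then met, the first because it is assumed and the second because $m=n\geq 1$ by hypothesis.

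With these substitutions in place, Theorem 4 yields at once $\phi\in T_n^\alpha(\beta+\tfrac{\lambda}{2})=T_n^\alpha(\beta+\tfrac{\beta}{2})=T_n^\alpha(\tfrac{3\beta}{2})$. The remaining step is to pass from this sharper order $\tfrac{3\beta}{2}$ down to the claimed order $\beta$. Since $0\leq\beta<1$ we have $\tfrac{3\beta}{2}\geq\beta$, and therefore the inequality $Re\,\tfrac{D^n\phi(z)^\alpha}{\alpha^nz^\alpha}>\tfrac{3\beta}{2}$ forces $Re\,\tfrac{D^n\phi(z)^\alpha}{\alpha^nz^\alpha}>\beta$; equivalently, the classes are nested as $T_n^\alpha(\tfrac{3\beta}{2})\subset T_n^\alpha(\beta)$. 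Hence $\phi\in T_n^\alpha(\beta)$, as required.

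There is no substantive obstacle here, since the entire analytic apparatus (the convex null sequence $\{c_k\}$, the auxiliary function $\varphi$, and the two applications of Lemma 2) was already carried out in the proof of Theorem 4. The only points that demand care are purely bookkeeping: checking that the parameter identification $m=n$, $\lambda=\beta$ is legitimate, that the constraint $m\geq 1$ in Theorem 4 translates to the stated $n\geq 1$, and that the monotonicity of $T_n^\alpha(\cdot)$ in its order parameter is invoked in the correct direction. I would note in passing that the argument in fact delivers the stronger conclusion $\phi\in T_n^\alpha(\tfrac{3\beta}{2})$, which could be recorded in place of the weaker stated bound $\phi\in T_n^\alpha(\beta)$.
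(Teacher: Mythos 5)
Your proposal is correct and is essentially the paper's own (implicit) argument: the paper states this corollary without separate proof precisely because it is the specialization $m=n$, $\lambda=\beta$ of Theorem 4, whose statement already records the containment $T_n^\alpha(\beta+\tfrac{\lambda}{2})\subset T_n^\alpha(\beta)$ that you verify via monotonicity of the order parameter. Your parenthetical sharper conclusion $\phi\in T_n^\alpha(\tfrac{3\beta}{2})$ is likewise just Theorem 4's bound (though note it is only meaningful as a class when $\tfrac{3\beta}{2}<1$, since the order parameter in the definition of $T_n^\alpha(\cdot)$ is restricted to $[0,1)$).
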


\begin{remark}

The quasi-convolution of $T_n^\alpha(\beta)$-functions is univalent in the unit disk if $n\geq 1$ and:

{\rm(i)} $\beta\geq\tfrac{1}{4}$ or
 
{\rm(ii)} $0<\alpha\leq 1$.

Thus our results provide an abundant source of functions which are univalent in the unit disk.
\end{remark}

\section{Applications}

For some applications of our results, let $f\in \mathbb{A}$ and define the following integrals:

\[
\phi_1(z)^\alpha=\frac{\alpha+c}{z^c}\int_0^zt^{c-1}f(t)^\alpha dt,\;\;\;\alpha+c>0,
\]
\[
\phi_2(z)^\alpha=\frac{2^\sigma}{z\Gamma(\sigma)}\int_0^z\left(\log\frac{z}{t}\right)^{\sigma-1}f(t)^\alpha dt,\;\;\;(\sigma>0)
\]
and
\[
\phi_3(z)^\alpha=\binom{\sigma+\gamma}{\gamma}\frac{\sigma}{z^\gamma}\int_0^z\left(1-\frac{t}{z}\right)^{\sigma-1}t^{\gamma-1}f(t)^\alpha dt,\;\;\;(\sigma,\;\gamma>0).
\]

The integral $\phi_1$ and its special cases ($\alpha=1$; $\alpha=1$, $c=0$ and $\alpha=1$, $c=1$) are well known and have been studied repeatedly in many literatures \cite{SA, BO, SDB, JKS, TOO, SO, RS}. The integrals $\phi_2$ and $\phi_3$  are new generalizations of the Jung-Kim-Srivastava one-parameter families of integral operators (for $\gamma>0$) \cite{JKS}. If for $\alpha>0$, we write $f(z)^\alpha=z^\alpha+A_2(\alpha)z^\alpha+...$, then in series form:
\[
\phi_1(z)^\alpha=z^\alpha+\sum_{k=2}^\infty\left(\frac{\alpha+c}{\alpha+c+k}\right)A_k(\alpha)z^{\alpha+k-1},
\]
Evaluating $\phi_2$ and $\phi_3$ in terms of Beta and Gamma functions we obtain 
\[
\phi_2(z)^\alpha=z^\alpha+\sum_{k=2}^\infty\left(\frac{2}{k+1}\right)^\sigma A_k(\alpha)z^{\alpha+k-1}
\]
and
\[
\phi_3(z)^\alpha=z^\alpha+\frac{\Gamma(\sigma+\gamma+1)}{\Gamma(\gamma+1)}\sum_{k=2}^\infty\frac{\Gamma(\gamma+k)}{\Gamma(\sigma+\gamma+k)}A_k(\alpha)z^{\alpha+k-1}.
\]

We shall now apply the quasi-convolution to prove:

\begin{theorem}
Let $f\in T_n^\alpha(\beta)$. Then $\phi_j\in T_n^\alpha(\beta)$, $j=1$, $2$, $3$.
\end{theorem}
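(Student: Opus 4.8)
The plan is to read each $\phi_j$ as a quasi-convolution and then invoke the mechanism of Theorem 2. From the series expansions recorded above, each $\phi_j(z)^\alpha$ has the shape $z^\alpha+\sum_{k=2}^\infty\lambda_k^{(j)}A_k(\alpha)z^{\alpha+k-1}$ for an explicit multiplier sequence $\lambda_k^{(j)}$ (namely $\frac{\alpha+c}{\alpha+c+k-1}$, $(\frac{2}{k+1})^\sigma$, and $\frac{\Gamma(\sigma+\gamma+1)}{\Gamma(\gamma+1)}\frac{\Gamma(\gamma+k)}{\Gamma(\sigma+\gamma+k)}$ respectively). Defining $g_j$ by $g_j(z)^\alpha/z^\alpha=1+\sum_{k=2}^\infty\lambda_k^{(j)}z^{k-1}$, Definition 1 gives $\phi_j(z)^\alpha=(f^\alpha*g_j^\alpha)(z)$ exactly. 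Hence the whole theorem reduces, for each $j$, to the single estimate $\text{Re}\,g_j(z)^\alpha/z^\alpha>1/2$ in $E$; granting this, the argument of Theorem 2 applies word for word, since by Lemma 2 the function $\frac{D^nf(z)^\alpha}{\alpha^nz^\alpha}*\frac{g_j(z)^\alpha}{z^\alpha}=\frac{D^n\phi_j(z)^\alpha}{\alpha^nz^\alpha}$ lies in the convex hull of the image of $\frac{D^nf(z)^\alpha}{\alpha^nz^\alpha}$, so that $\text{Re}\,\frac{D^n\phi_j(z)^\alpha}{\alpha^nz^\alpha}>\beta$ and $\phi_j\in T_n^\alpha(\beta)$.

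To secure the half-plane estimate I would copy the device of Theorem 3: it suffices to show that the sequence $c_0=1$, $c_k=\lambda_{k+1}^{(j)}$ for $k\geq1$ is convex null, since then, writing $g_j(z)^\alpha/z^\alpha=\tfrac12+(\tfrac12+\sum_{k\geq1}c_kz^k)$ and applying Lemma 3 to the bracketed function, we get $\text{Re}\,g_j(z)^\alpha/z^\alpha>1/2$. For $\phi_1$ the sequence $c_k=\frac{\alpha+c}{\alpha+c+k}$ is convex null by the elementary estimate of Section 2 with $\alpha+c$ in place of $\alpha$. For $\phi_2$ and $\phi_3$ the efficient route is to represent each $c_k$ as a moment $c_k=\int_0^1 t^k\,d\mu_j(t)$ against a positive measure on $[0,1]$: the Beta identity $\frac{\Gamma(\gamma+k+1)}{\Gamma(\sigma+\gamma+k+1)}=\frac{1}{\Gamma(\sigma)}\int_0^1 t^{\gamma+k}(1-t)^{\sigma-1}\,dt$ yields $d\mu_3(t)\propto t^\gamma(1-t)^{\sigma-1}\,dt$, and $(k+2)^{-\sigma}=\frac{1}{\Gamma(\sigma)}\int_0^1 t^{k+1}(-\log t)^{\sigma-1}\,dt$ yields $d\mu_2(t)\propto t(-\log t)^{\sigma-1}\,dt$. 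Every such moment sequence is convex null automatically, because $c_k-c_{k+1}=\int_0^1 t^k(1-t)\,d\mu\geq0$, $(c_k-c_{k+1})-(c_{k+1}-c_{k+2})=\int_0^1 t^k(1-t)^2\,d\mu\geq0$, and $c_k\to0$ by dominated convergence.

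The only real work lies in exhibiting these measures, i.e.\ in the Beta/Gamma evaluation of the integrals defining $\phi_2$ and $\phi_3$ (already summarized before the theorem statement); once the moment representation is in place the convex-null verification is free. I expect $\phi_3$ to be the main obstacle, since the Gamma-quotient must be manipulated carefully and one must confirm that $\mu_3$ is a probability measure, so that $c_0=1$ in accordance with the normalization $f(0)=0$, $f'(0)=1$. I would also point out that, in contrast to Theorem 2, this line of argument uses neither $0<\alpha\leq1$ nor the convexity of any $g_j$: the bound $\text{Re}\,g_j(z)^\alpha/z^\alpha>1/2$ is obtained directly from Lemma 3, so the conclusion holds for all $\alpha>0$ subject only to $\alpha+c>0$, $\sigma>0$ and $\gamma>0$.
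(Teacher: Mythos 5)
Your proposal is correct, and it shares the paper's overall skeleton: write $\phi_j(z)^\alpha=(f^\alpha*g_j^\alpha)(z)$ for a suitable $g_j$, reduce everything to the half-plane estimate $\mathrm{Re}\,g_j(z)^\alpha/z^\alpha>1/2$, and finish with the Lemma 2 convex-hull argument (you invoke Theorem 2's mechanism directly; the paper invokes its Corollary 2, which is the same thing). Where you genuinely diverge is in how that key estimate is obtained. The paper defines each $g_j$ as the corresponding integral operator applied to $1/(1-t)$, cites an earlier result (\cite{BO}) for $\mathrm{Re}\,g_1^\alpha/z^\alpha>1/2$, and proves a new Lemma 4 --- by taking real parts under the integral sign along radii --- to handle $g_2$ and $g_3$. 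You instead work purely on the coefficient side: you represent each multiplier sequence as a moment sequence $c_k=\int_0^1 t^k\,d\mu_j(t)$ of a probability density on $[0,1]$ (via the Beta/Gamma identities), observe that any such sequence is automatically convex null since $c_k-c_{k+1}=\int_0^1 t^k(1-t)\,d\mu_j\geq 0$ and the second differences are $\int_0^1 t^k(1-t)^2\,d\mu_j\geq 0$, and then apply Lemma 3. Your verifications check out (including the identities $\int_0^1 t^{k+1}(-\log t)^{\sigma-1}dt=\Gamma(\sigma)/(k+2)^\sigma$ and the Beta evaluation for $j=3$, and the fact that both measures have total mass $1$), so Lemma 4 becomes unnecessary in your treatment; your route is also more uniform, handling $j=1,2,3$ identically and avoiding the external citation for $g_1$. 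What the paper's route buys in exchange is Lemma 4 itself, a statement of independent interest (the integral operators preserve arbitrary half-planes $\mathrm{Re}>\beta$, not merely the specific function $1/(1-z)$), which is why the author isolates it. One small point: your multiplier $\frac{\alpha+c}{\alpha+c+k-1}$ for $\phi_1$ is what the integral actually yields (the paper's displayed $\frac{\alpha+c}{\alpha+c+k}$ appears to be a typo), but since both shifted sequences are moment sequences of densities on $[0,1]$, your argument is insensitive to the discrepancy.
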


Meanwhile let us prove the following:

\begin{lemma}
Let $p(z)=1+c_1z+c_2z^2+...$ be analytic in $E$ with Re $p(z)>\beta$, $0\leq\beta<1$. Then the real parts of the integral transformations
\[
\begin{split}
\vartheta(p(z))& =\frac{2^\sigma}{z\Gamma(\sigma)}\int_0^z\left(\log\frac{z}{t}\right)^{\sigma-1}p(t)dt,\;\;\;(\sigma>0)\\
& =2^\sigma+\sum_{k=1}^\infty\left(\frac{2}{k+1}\right)^\sigma c_kz^k
\end{split}
\]
and
\[
\begin{split}
\vartheta(p(z))& =\binom{\sigma+\gamma}{\gamma}\frac{\sigma}{z^\gamma}\int_0^z\left(1-\frac{t}{z}\right)^{\sigma-1}t^{\gamma-1}p(t)dt,\;\;\;(\sigma,\;\gamma>0)\\
& =\frac{\sigma+\gamma}{\gamma}+\frac{\Gamma(\sigma+\gamma+1)}{\Gamma(\gamma+1)}\sum_{k=1}^\infty\frac{\Gamma(\gamma+k)}{\Gamma(\sigma+\gamma+k)}c_kz^k
\end{split}
\]
are also greater than $\beta$.
\end{lemma}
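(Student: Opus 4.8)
The plan is to bypass the series expansions entirely and work directly with the integral representations, reducing each operator to a weighted average of the values of $p$ along the radial segment from $0$ to $z$. For fixed $z\in E$ I would parametrize that segment by $t=zs$, $s\in[0,1]$, noting that $\{zs:s\in[0,1]\}$ is a compact subset of $E$ (since $|zs|\le|z|<1$) on which $p$ is analytic, so the integrals are honest integrals of the continuous complex-valued function $s\mapsto p(zs)$ against explicit nonnegative kernels on $[0,1]$. After that the real-part estimate becomes a one-line consequence of positivity, with no need to estimate individual coefficients.

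First, for the logarithmic kernel the substitution $t=zs$ (so $\log(z/t)=-\log s$ and $dt=z\,ds$) gives
\[
\vartheta(p(z))=\frac{2^\sigma}{\Gamma(\sigma)}\int_0^1(-\log s)^{\sigma-1}p(zs)\,ds .
\]
Here $(-\log s)^{\sigma-1}>0$ on $(0,1)$ and $\int_0^1(-\log s)^{\sigma-1}\,ds=\Gamma(\sigma)$ (the very Gamma-integral that produces the weights $(2/(k+1))^\sigma$). Taking real parts under the integral and using Re $p(zs)>\beta$ for every $s\in[0,1]$, I obtain Re $\vartheta(p(z))>2^\sigma\beta\ge\beta$, because $2^\sigma\ge1$ for $\sigma>0$ and $\beta\ge0$. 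Notice that the prefactor times the total kernel mass, $(2^\sigma/\Gamma(\sigma))\cdot\Gamma(\sigma)=2^\sigma$, is exactly the constant term of the stated series.

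For the Beta kernel the same substitution (now $1-t/z=1-s$, $t^{\gamma-1}=z^{\gamma-1}s^{\gamma-1}$) yields
\[
\vartheta(p(z))=\binom{\sigma+\gamma}{\gamma}\sigma\int_0^1 s^{\gamma-1}(1-s)^{\sigma-1}p(zs)\,ds ,
\]
whose kernel is positive on $(0,1)$ with total mass $B(\gamma,\sigma)=\Gamma(\gamma)\Gamma(\sigma)/\Gamma(\gamma+\sigma)$. A short Gamma-function reduction shows $\binom{\sigma+\gamma}{\gamma}\sigma\,B(\gamma,\sigma)=(\sigma+\gamma)/\gamma$, which is again the constant term of the series and is $\ge1$ for $\sigma,\gamma>0$. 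Passing to real parts as before gives Re $\vartheta(p(z))>\frac{\sigma+\gamma}{\gamma}\beta\ge\beta$.

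The only points needing care are the interchange of Re with the integral and the strictness of the inequality, and both are clean. Since Re is real-linear and continuous it passes under the integral sign, and because the segment $\{zs:s\in[0,1]\}$ is compact in $E$ the continuous map $s\mapsto$ Re $p(zs)$ attains a minimum that strictly exceeds $\beta$; integrating this against a kernel positive on a set of full measure preserves the strict inequality. I expect the conceptual heart — and the only genuinely substantive step — to be the recognition that each transform is (a constant $\ge1$) times a \emph{probability average} of $p$ over the segment, so that the hypothesis Re $p-\beta>0$ is inherited automatically; the Gamma- and Beta-integral identities then merely confirm that the relevant constants coincide with the displayed constant terms $2^\sigma$ and $(\sigma+\gamma)/\gamma$.
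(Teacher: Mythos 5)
Your proposal is correct and follows essentially the same route as the paper: both reduce each transform to an integral of $p$ along the radial segment from $0$ to $z$ against a positive kernel (the paper writes $t=\rho e^{i\theta}$, you write $t=zs$, which is the same parametrization), take real parts under the integral using Re $p>\beta$, and evaluate the remaining kernel mass via Beta/Gamma identities to get the constants $2^\sigma$ and $(\sigma+\gamma)/\gamma$, each $\geq 1$. The only differences are cosmetic: you treat both kernels explicitly and spell out the compactness argument for strictness, whereas the paper proves only the second case and declares the first ``similar.''
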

\begin{proof}
The proofs of the assertions are similar.  We prove the second part as follows. Let $z=re^{i\theta}$ and $t=\rho e^{i\theta}$, $0<\rho\leq r<1$ so that
\[
\vartheta(p(re^{i\theta}))=\binom{\sigma+\gamma}{\gamma}\frac{\sigma}{r^\gamma}\int_0^r\left(1-\frac{\rho}{r}\right)^{\sigma-1}\rho^{\gamma-1}p(\rho e^{i\theta})d \rho,
\]
which gives
\[
Re\;\vartheta(p(re^{i\theta}))=\binom{\sigma+\gamma}{\gamma}\frac{\sigma}{r^\gamma}\int_0^r\left(1-\frac{\rho}{r}\right)^{\sigma-1}\rho^{\gamma-1}Re\;p(\rho e^{i\theta})d \rho.
\]
Since Re $p(z)>\beta$, we have
\[
Re\;\vartheta(p(re^{i\theta}))>\beta\binom{\sigma+\gamma}{\gamma}\frac{\sigma}{r^\gamma}\int_0^r\left(1-\frac{\rho}{r}\right)^{\sigma-1}\rho^{\gamma-1}d \rho.
\]
Evaluating the above integral in terms of Beta and Gamma functions, noting that
\[
\binom{\sigma}{\gamma}=\frac{\Gamma(\sigma+1)}{\Gamma(\sigma-\gamma+1)\Gamma(\gamma+1)}
\]
we shall obtain
\[
Re\;\vartheta(p(re^{i\theta}))>\beta\frac{\sigma+\gamma}{\gamma}
\]
which completes the proof.
\end{proof}

Now define
\[
g_1(z)^\alpha=\frac{\alpha+c}{z^c}\int_0^zt^{\alpha+c-1}\frac{1}{1-t}dt,
\]
\[
g_2(z)^\alpha=\frac{2^\sigma z^{\alpha-1}}{\Gamma(\sigma)}\int_0^z\left(\log\frac{z}{t}\right)^{\sigma-1}\frac{1}{1-t}dt
\]
and
\[
g_3(z)^\alpha=\binom{\sigma+\gamma}{\gamma}\frac{\sigma}{z^{\gamma-\alpha}}\int_0^z\left(1-\frac{t}{z}\right)^{\sigma-1}t^{\gamma-1}\frac{1}{1-t}dt.
\]
Then
\[
\frac{g_1(z)^\alpha}{z^\alpha}=\frac{\alpha+c}{z^{\alpha+c}}\int_0^zt^{\alpha+c-1}\frac{1}{1-t}dt
\]
is an integral iteration of $p(z)=1/(1-z)$ where Re $p(z)>1/2$. Hence $g_1\in T_0^\alpha(\tfrac{1}{2})$, that is Re $g_1^\alpha/z^\alpha>1/2$ (see \cite{BO}). Similarly by Lemma 4, Re $g_2^\alpha/z^\alpha>1/2$ and Re $g_3^\alpha/z^\alpha>1/2$. If we write $g_j^\alpha$, $j=1$, 2, 3 also in series form as $\phi_j^\alpha$ we see that for each $f\in T_n^\alpha(\beta)$, $\phi_j^\alpha=g_j^\alpha*f^\alpha$. Thus by Corollary 2, we find that the class $T_n^\alpha(\beta)$ is invariant under the transformations $\phi_j$, which proves Theorem 5.

This application in particular provides a new proof of the closure property of the class $T_n^\alpha(\beta)$ under integral tranformation $\phi_1$.
 
 \medskip

{\it Acknowledgements.} This work is in honour of my teacher and professor of Mathematics, Prof. E. A. Akinrelere, who retired recently from active service from the Obafemi Awolowo University, Ile-Ife, Nigeria. The author appreciates the professor's immense role in his academic development. The author is indebted to the referee for helpful comments and suggestions.

\vspace{10pt}

\hspace{-4mm}{\small{Received}}

\vspace{-12pt}
\ \hfill \
\begin{tabular}{c}
{\small\em  Department of Mathematics}\\
{\small\em  University of Ilorin}\\
{\small\em  Ilorin, Nigeria}\\
{\small\em E-mail: {\tt kobabalola@gmail.com}}\\
\end{tabular}

\end{document}